\theoremstyle{plain}
\newtheorem{thm}{Theorem}
\newtheorem{defn}[thm]{Definition}
\newtheorem{lem}[thm]{Lemma}
\newtheorem{propn}[thm]{Proposition}
\def\A{{\mathcal A}}
\def\C{{\mathfrak C}}
\def\G{{\Gamma}}
\def\H{{\mathcal H}}
\def\g{{\gamma}}
\def\Aut{{\rm Aut}}
\def\GL{{\rm GL}}
\def\SO{{\rm SO}}
\def\Spin{{\rm Spin}}
\def\ot{{\otimes}}
\def\tr{{\rm tr}}
\begin{document}

\title{Excellence of $G_2$ and $F_4$}
\author{Shripad M. Garge}
\address{Shripad M. Garge. Department of Mathematics, Indian Institute of Technology Bombay, Powai, Mumbai. 400 076. INDIA.}
\email{shripad@math.iitb.ac.in}

\begin{abstract}
A linear algebraic group $G$ defined over a field $k$ is said to be excellent if for every field extension $L$ of $k$ the anisotropic kernel of the group $G \ot_k L$ is defined over $k$. 
We prove that groups of type $G_2$ and $F_4$ are excellent over any field $k$ of characteristic other than $2$ and $3$.
\end{abstract}

\maketitle


\begin{centerline}
{\small {\dn k\4\314w sO\314w aAIQyA -\9{m}(yT\0}}
\end{centerline}
\vskip5mm

One basic (and difficult) problem in the theory of linear algebraic groups is to understand the structure of anisotropic groups. 
With this motivation, Kersten and Rehmann introduced the notion of excellent algebraic groups in \cite{KR} in analogy with the notion of excellent quadratic forms introduced by Knebusch (\cite{K1}). 
The excellence properties of some classical groups have been studied in \cite{KR, IK}.
In this note, we study the excellence properties of the groups of type $G_2$ and $F_4$. 
In contrast with the cases of classical groups, the groups $G_2$ and $F_4$ are always excellent over a field of characteristic different from $2$ and $3$. 
The main theorems of the paper are the following ones. 

\begin{thm}\label{thm:1}
Let $k$ be a field of characteristic different from $2$ and $3$, and let $G$ be a group of type $G_2$ defined over $k$. 
Then $G$ is excellent over $k$.
\end{thm}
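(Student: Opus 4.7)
\bigskip
\noindent\textbf{Proof proposal.}
The plan is to reduce the statement to a well-known dichotomy for Pfister forms. Since $\mathrm{char}(k) \neq 2, 3$, every group of type $G_2$ over $k$ is isomorphic to $\Aut(\C)$ for a uniquely determined octonion (Cayley) algebra $\C$ over $k$, and the norm form $N_\C$ is a $3$-fold Pfister form. Moreover, this correspondence commutes with base change: for any extension $L/k$, the group $G \ot_k L$ corresponds to the octonion algebra $\C \ot_k L$, whose norm form is $N_\C \ot_k L$. I will use the classical fact that $G = \Aut(\C)$ is isotropic over $k$ if and only if $\C$ is split, equivalently if and only if $N_\C$ is hyperbolic; in that case $G$ is in fact split.

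The key step is to invoke the Pfister dichotomy: a Pfister form is either anisotropic or hyperbolic, and this property is preserved under arbitrary base change. Applied to the $3$-Pfister form $N_\C$, this means that for every extension $L/k$, the form $N_\C \ot_k L$ is either anisotropic or hyperbolic. Translating back to the group, $G \ot_k L$ is therefore either anisotropic (when $N_\C \ot_k L$ is anisotropic) or split (when $N_\C \ot_k L$ is hyperbolic); no intermediate Tits index occurs.

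With this dichotomy in hand, the excellence claim follows immediately. If $G \ot_k L$ is split, its anisotropic kernel is trivial and hence trivially defined over $k$. If $G \ot_k L$ is anisotropic, its anisotropic kernel equals $G \ot_k L$ itself, which is the base change of the $k$-group $G$ and is therefore defined over $k$. In either case the anisotropic kernel descends to $k$, which is exactly the excellence property. I do not foresee a genuine obstacle beyond citing the standard structure theorem for $G_2$-groups via octonion algebras and the Pfister dichotomy; the entire argument rests on these two inputs and the observation that they are stable under base change.
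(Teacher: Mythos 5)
Your proposal is correct and follows essentially the same route as the paper: both arguments rest on the dichotomy that a group of type $G_2$ is either anisotropic or split over any extension, so the anisotropic kernel is always either trivial or the whole group and hence defined over $k$. The only difference is that the paper cites this dichotomy directly from Springer, whereas you derive it from the octonion norm form being a $3$-fold Pfister form; both are standard and valid.
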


\begin{thm}\label{thm:main}
Let $k$ be a field of characteristic different from $2$ and $3$, and let $G$ be a group of type $F_4$ defined over $k$. 
Then $G$ is excellent over $k$.
\end{thm}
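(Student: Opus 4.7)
The plan is to exploit the correspondence between groups of type $F_4$ and Albert algebras to translate excellence into a descent statement for octonion algebras, and then apply the cohomological invariant $f_3$. Since $\operatorname{char}(k) \neq 2,3$, every group of type $F_4$ over $k$ is isomorphic to $\Aut(J)$ for a unique Albert algebra $J$ over $k$, and this correspondence is compatible with arbitrary base change. I would fix such a $J$ and argue with Albert algebras throughout.

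First I would recall the three possible Tits $k$-indices of type $F_4$: the split index, the rank-one index $F_4^{21}$ whose semisimple anisotropic kernel is of type $B_3$, and the fully anisotropic index. These correspond respectively to $J$ being split, reduced but non-split, and non-reduced (anisotropic). To any Albert algebra $J$ one attaches Serre's mod-$2$ cohomological invariant $f_3(J) \in H^3(k, \mathbb{Z}/2)$, a $3$-fold Pfister form. Every $3$-Pfister form over $k$ is the norm form of a unique octonion $k$-algebra, so $f_3$ canonically assigns to $J$ an octonion algebra $C_J/k$; when $J \cong \H_3(C, \g)$ is reduced one has $C_J \cong C$, and in general the formation of $C_J$ commutes with scalar extension because $f_3$ does.

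The argument then splits into cases according to the structure of $J_L := J \ot_k L$. If $J_L$ is split, the anisotropic kernel of $G_L$ is trivial, and if $J_L$ is anisotropic, the anisotropic kernel is $G_L$ itself; both are defined over $k$. The genuine case is $J_L$ reduced but non-split, so that $J_L \cong \H_3(C', \g')$ for some octonion $L$-algebra $C'$ and diagonal $\g'$. From $f_3(J_L) = f_3(J) \ot_k L$, together with the fact that the $f_3$ of a reduced Albert algebra is the norm form of its octonion, I would deduce that $C'$ and $C_J \ot_k L$ have the same norm form; since an octonion algebra is determined up to isomorphism by its norm form, $C' \cong C_J \ot_k L$. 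The semisimple anisotropic kernel of $\Aut(\H_3(C', \g'))$, of type $B_3$, is built functorially from $C'$---concretely as $\Spin$ of the $7$-dimensional quadratic form given by the norm on the trace-zero pure octonions of $C'$---so it descends to the corresponding $k$-group built from $C_J$, which proves excellence.

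The hard part will be the final identification: showing that the semisimple anisotropic kernel of the $F_4^{21}$ index depends only on the octonion algebra $C'$ and not on the scalar matrix $\g'$. To handle this I would work inside $\Aut(\H_3(C', \g'))$ with the stabilizer of a primitive idempotent (a $\Spin_9$-type subgroup of $F_4$) and extract the $B_3$-part that fixes pointwise a rank-one $L$-split torus of $F_4$; the role of $\g'$ is then confined to the isotropy of the embedded torus rather than to the anisotropic kernel. Once this $B_3$-part is exhibited as a functor of $C'$ alone, the descent of $C_J$ established via $f_3$ completes the proof.
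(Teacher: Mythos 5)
Your proposal is correct and follows essentially the same route as the paper: reduce to the rank-one case, realize the anisotropic kernel inside the stabilizer of a primitive idempotent (a $B_4=\Spin_9$ subgroup whose anisotropic kernel is the $B_3$ built from the coordinate octonion algebra), and descend that octonion algebra to $k$. Your use of the $f_3$ invariant is exactly the paper's key input, the Serre--Rost theorem cited from Petersson--Racine on the mod $2$ invariants of Albert algebras, which says the coordinate algebra of $\A\ot_k L$ comes from a single octonion $k$-algebra independent of $L$.
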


We now describe the contents of this paper.
In $\S 1$ we give some basic definitions and prove Theorem \ref{thm:1}.
A group of type $F_4$ is described as the automorphism group of an Albert algebra. 
We recall the definition of an Albert algebra and some basic facts pertaining to it in $\S 2$. 
To study the excellence properties of groups, we need to compute their anisotropic kernels.
We calculate the anisotropic kernels of groups of type $F_4$ in $\S 3$. 
Finally, in $\S 4$, we give a proof of Theorem \ref{thm:main}. 

At the end of $\S 4$, we give another proof of Theorem \ref{thm:main}, due to an anonymous referee. 
However, in our proof, we have explicitly calculated the anisotropic kernel. 
We hope that this description will be useful for other purposes. 
In particular, since a group of type $E_6$ is also described using Albert algebras, the explicit description of the anisotropic kernel and, in general, the methods of this note might be useful in handling the $E_6$ case. 

The author thanks Maneesh Thakur and Dipendra Prasad for several fruitful discussions.

\section{Basic definitions and proof of theorem \ref{thm:1}}
Let $k$ be a field and let $G$ be a connected semisimple linear algebraic group defined over $k$. 
Choose a maximal $k$-split torus $S$ in $G$ and let $Z(S)$ denote the centraliser of the torus $S$ in $G$. 
The connected component of the derived group $DZ(S) = [Z(S), Z(S)]$ is defined to be the {\em anisotropic kernel} of $G$ over $k$. 
Since any two maximal $k$-split tori in $G$ are conjugate in $G(k)$, it follows that, up to isomorphism, the anisotropic kernel of $G$ over $k$ does not depend on the choice of $S$. 
We denote it by $G_{an}$. 

By construction, $G_{an}$ is a connected anisotropic semisimple $k$-subgroup of $G$. 
One sees that the group $G$ is anisotropic if and only if it equals $G_{an}$ and that the group $G$ is quasi-split if and only if $G_{an}$ is trivial. 
The anisotropic kernel $G_{an}$ is an important ingredient of the data, together with the index of $G$ and the type of $G$, that determines the $k$-isomorphism class of $G$ (\cite[\S 2.7]{T}).

This notion is analogous to the notion of anisotropic kernel of a regular quadratic form defined over a field. 
Indeed the result stated in the previous paragraph is also analogous to Witt's theorem that a regular quadratic form defined over $k$ is determined up to $k$-isomorphism by its anisotropic kernel and its Witt index. 

Knebusch (\cite{K1, K2}) has developed the notion of excellent quadratic forms which has been useful in the study of regular quadratic forms. 
Kersten and Rehmann (\cite{KR}) introduced the notion of excellent algebraic groups in an analogous way. 

\begin{defn}
Let $G$ be a linear algebraic group defined over a field $k$. 
The group $G$ is called excellent over $k$ if for any field extension $L$ of $k$ there exists a group $H$ defined over $k$ such that $H \otimes_k L$ is isomorphic to the anisotropic kernel of $G \otimes_k L$. 
\end{defn}

We remark here that the $k$-group $H$, if it exists, need not be unique (up to $k$-isomorphism) group with the property described in the above definition. 
Indeed, any $L/k$-form of the anisotropic kernel of $G \otimes_k L$, if there exists one, will satisfy the required condition.

Excellence properties of some groups of classical type are studied in \cite{KR} and later in \cite{IK}. 
We now prove Theorem \ref{thm:1} that a group of type $G_2$ is excellent over a field of characteristic different from $2$ and $3$. 

\begin{proof}[Proof of Theorem \ref{thm:1}.] 
It is known that a group $G$ of type $G_2$ is either anisotropic or it is split (\cite[Proposition 17.4.2]{Sp}). 
Therefore the anisotropic kernel $G_{an}$ over any extension is either the whole group or the trivial subgroup, and hence it is always defined over the base field $k$.
\end{proof}

\section{Albert algebras and groups of type $F_4$.}\label{section:definitions}

We recall some basic properties of Albert algebras and groups of type $F_4$ in this section. 
Our main reference is \cite[Chapter 5]{SV}. 

Throughout this section, $k$ denotes a field of characteristic other than $2$ and $3$. 
Sometimes we state results which hold for fields without any restriction on the characteristic (Theorem \ref{thm:albert}, for instance).
Then we use the letter $K$ to denote a field. 

Let $\C$ be an octonion algebra defined over $k$ and let $c \mapsto \overline{c}$ denote the conjugation in $\C$.
Fix a diagonal matrix $\G \in \GL_3(k)$ with entries $\g_1, \g_2, \g_3$. 
Consider the algebra of the hermitian elements with respect to $\G$
$$\H(\C; \G) = \H(\C; \g_1, \g_2, \g_3) := \left\{
\begin{pmatrix}
x_1 & c_3 & {\g_1}^{-1} \g_3 \overline{c_2} \\
{\g_2}^{-1} \g_1 \overline{c_3} & x_2 & c_1 \\
c_2 & {\g_3}^{-1} \g_2 \overline{c_1} & x_3 
\end{pmatrix} :x_i \in k, c_i \in \C\right\} .$$
We define a new multiplication in $\H(\C; \G)$ by $x y := (x \cdot y + y \cdot x)/2$, where the product $x \cdot y$ denotes the usual matrix multiplication, making $\H(\C; \G)$ a non-associative $k$-algebra. 
In fact, $\H(\C;\G)$ is what is called a non-associative {\em exceptional Jordan algebra}.

\begin{defn}
An Albert algebra over $k$ is a $k$-algebra $\A$ such that for an extension $L/k$, $\A \ot_k L$ is isomorphic to some $\H(\C; \g_1, \g_2, \g_3)$ defined over $L$. 
\end{defn}

The product in $\H(\C; \G)$ gives rise to a quadratic norm on $\H(\C; \G)$, $Q(x) = \frac{1}{2} \tr(x^2)$, and the associated bilinear form is given by 
$$\langle x, y \rangle := Q(x + y) - Q(x) - Q(y) = \tr(xy) .$$
Further, the octonion algebra $\C$ is determined up to isomorphism by the Albert algebra $\H(\C; \G)$ (\cite[\S 5.6]{SV}) and we call it {\em the coordinate algebra of} $\H(\C, \G)$.

Albert algebras are important because they give explicit descriptions of the exceptional groups of type $F_4$ and $E_6$. 

\begin{thm}[{\cite[page 164]{H}}, {\cite[17.6.9]{Sp}}]\label{thm:albert}
Let $K$ be a field and let $G$ be a group of type $F_4$ defined over $K$. 
Then there exists an Albert algebra $\A$ $($unique up to isomorphism$)$ defined over $K$ such that $G$ is isomorphic to the algebraic $K$-group $\Aut_K(\A)$. 
\end{thm}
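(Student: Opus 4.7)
The plan is to establish this via Galois descent (non-abelian cohomology), showing that both groups of type $F_4$ over $K$ and Albert algebras over $K$ are classified by the same pointed set $H^1(K, F_4^{sp})$, where $F_4^{sp}$ denotes the split group of type $F_4$ over $K$.

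First I would dispose of the split case: verify that the automorphism group of a split Albert algebra $\A_0 = \H(\C_0; 1, 1, 1)$, with $\C_0$ the split octonion $K$-algebra, is exactly $F_4^{sp}$. One shows $\Aut_K(\A_0)$ is a smooth affine $K$-group, computes its Lie algebra $\mathrm{Der}(\A_0)$ and checks that it is $52$-dimensional with root system $F_4$. Connectedness and simple-connectedness can be read off from the faithful action of $\Aut_K(\A_0)$ on the $26$-dimensional space of trace-zero elements of $\A_0$, which realises the fundamental $26$-dimensional representation of $F_4^{sp}$. The hypothesis $\mathrm{char}\, k \neq 2, 3$ used throughout the paper guarantees that the standard characteristic-zero computations of Chevalley and Schafer go through unchanged.

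Next I would pass to the general case. Any group $G$ of type $F_4$ over $K$ becomes isomorphic to $F_4^{sp}$ after base change to $\bar K$, so $G$ is a $K/\bar K$-form of $F_4^{sp}$. Because the Dynkin diagram of $F_4$ admits no non-trivial automorphisms, and because $F_4$ is both simply connected and adjoint, one has $\Aut(F_4^{sp}) = F_4^{sp}$ as $K$-groups; hence such forms $G$ are classified by $H^1(K, F_4^{sp})$. On the Albert side, every Albert algebra $\A$ over $K$ is an $L/K$-form of $\A_0$ for some Galois extension $L/K$, so Albert algebras up to $K$-isomorphism are classified by $H^1(K, \Aut_K(\A_0)) = H^1(K, F_4^{sp})$ using the first step. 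The functor $\A \mapsto \Aut_K(\A)$ is compatible with twisting (apply $\Aut$ to a cocycle for $\A$ and one obtains the same cocycle in $F_4^{sp}$), so it induces a map of pointed sets which is literally the identity. Existence and uniqueness of $\A$ up to $K$-isomorphism then fall out of the bijection.

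The main obstacle I expect is the very first step, namely pinning down $\Aut_K(\A_0)$ as the split \emph{simply connected} group of type $F_4$ rather than merely as a group with the correct Lie algebra. One must rule out disconnectedness and verify that the automorphism functor is represented by a smooth group scheme of the expected dimension, and one must identify the root datum precisely — for instance by exhibiting a maximal torus stabilising a suitable Peirce decomposition of $\A_0$ and computing weights on $\mathrm{Der}(\A_0)$. A technically cleaner variant would be to identify $F_4^{sp}$ with the stabiliser in $\mathrm{GL}(\A_0)$ of the cubic norm form and of the identity element of $\A_0$, bypassing the explicit root-system calculation; either route works under the characteristic hypothesis.
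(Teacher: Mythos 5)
The paper does not actually prove this statement: it is imported as a black box, with citations to Hijikata and to Springer's book, so there is no internal argument to compare yours against. Your descent outline is the standard modern proof (it is essentially how the result is treated in Springer--Veldkamp and in Knus--Merkurjev--Rost--Tignol), and its logical skeleton is sound: once one knows that $\Aut_K(\A_0)$ is the split connected group of type $F_4$, the facts that $F_4$ has trivial centre and no diagram automorphisms give $\Aut(F_4^{sp}) = F_4^{sp}$, both sides are classified by $H^1(K, F_4^{sp})$, and the twisting map is the identity on cocycles, yielding existence and uniqueness simultaneously. You are also right to locate the real work in the first step: connectedness of $\Aut_K(\A_0)$ does not follow from faithfulness of the $26$-dimensional representation and requires an actual argument (e.g.\ transitivity of the action on frames of primitive idempotents, as in Springer--Veldkamp Chapter 7); on the other hand simple-connectedness is free once connectedness and the type are known, since the fundamental group of the root system $F_4$ is trivial, so that part need not be ``read off'' from anything.

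One genuine caveat: the theorem is stated for an arbitrary field $K$, and the paper explicitly flags it as one of the results holding \emph{without} restriction on the characteristic. Your proof, as you yourself note, leans on the Chevalley--Schafer computation and hence on $\mathrm{char}\,K \neq 2,3$; in characteristics $2$ and $3$ even the definition of an Albert algebra has to be recast (quadratic Jordan algebras), and the smoothness of the automorphism group scheme --- which your use of $H^1$ of the abstract Galois group silently requires --- becomes a point to be checked rather than assumed. Since the paper only ever applies the theorem over fields of characteristic different from $2$ and $3$, this does not damage the application, but as a proof of the statement as written it falls short of the claimed generality.
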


It is known that the possible split ranks of a group $G$ of type $F_4$ are $0, 1$ and $4$ (\cite[page 60]{T}, \cite[Chapter 17]{Sp}). 
We can also read off the split rank of $G$ from the structure of the corresponding Albert algebra.

If $G$ is anisotropic, i.e, if it has split rank $0$ then it is $k$-isomorphic to $\Aut_k(\A)$ where the Albert algebra $\A$ has no nontrivial nilpotent elements. 
If $G$ is split then the corresponding Albert algebra is $k$-isomorphic to some $\H(\C; \G)$ where $\C$ is the split octonion algebra over $k$. 
Finally, if $G$ has split rank $1$ then the corresponding Albert algebra $\A$ has nontrivial nilpotent elements but does not have two non-proportional orthogonal ones.
The structure of such an Albert algebra is given explicitly by the following result, which is essentially due to Albert and Jacobson (\cite[Theorem 6]{AJ}). 

\begin{propn}\label{propn:rank1}
Let $k$ be a field of characteristic different from $2$ and $3$, and let $\A$ be an Albert algebra defined over $k$. 
If the algebra $\A$ has nontrivial nilpotent elements but does not have two non-proportional orthogonal nilpotent elements then it is $k$-isomorphic to $\H(\C; 1, -1, 1)$ where $\C$ is a division octonion algebra defined over $k$.
\end{propn}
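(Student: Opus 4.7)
The plan is to reduce the classification in three stages. First I show that $\A$ is reduced, i.e., $k$-isomorphic to some $\H(\C;\G)$. Next I show that the coordinate octonion algebra $\C$ must be a division algebra. Finally I normalize the diagonal matrix $\G = \text{diag}(\g_1,\g_2,\g_3)$ to $\text{diag}(1,-1,1)$.

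The first step is standard: an Albert algebra is reduced if and only if its cubic norm form is isotropic, and any nonzero nilpotent is a nontrivial zero of the norm form. So the hypothesis gives $\A \cong \H(\C;\g_1,\g_2,\g_3)$ for some octonion algebra $\C$ and scalars $\g_i \in k^\times$. For the second step, if $\C$ were split then, because the norm of a split octonion algebra is universal, all reduced Albert algebras with coordinate $\C$ are $k$-isomorphic to the unique split Albert algebra. That algebra admits two non-proportional orthogonal nilpotents (exhibited in off-diagonal Peirce blocks attached to distinct primitive diagonal idempotents), contradicting our hypothesis. Hence $\C$ must be a division octonion algebra, and in particular $N_\C$ is anisotropic.

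For the normalization, a direct computation of $Q(x) = \frac{1}{2}\tr(x^2)$ on the subspace of trace-zero, purely off-diagonal elements of $\H(\C;\G)$ shows that this quadratic form is isometric, up to squares in each summand, to $\langle \g_1,\g_2,\g_3\rangle \ot N_\C$. Since $N_\C$ is an anisotropic $3$-fold Pfister form, this form is isotropic if and only if $-\g_i/\g_j \in N_\C(\C^\times)$ for some pair $i \neq j$. The existence of a nontrivial nilpotent forces such isotropy. Now the $k$-isomorphism class of $\H(\C;\G)$ is invariant under permutation of the $\g_i$, simultaneous scaling, and multiplication of each $\g_i$ by an element of $N_\C(\C^\times)$. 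Applying these equivalences one arranges $\g_1 = 1$ and $\g_2 = -1$. The hypothesis that $\A$ has no two non-proportional orthogonal nilpotents must then pin down $\g_3$ to the class of $1$ modulo $N_\C(\C^\times)$: any other class would produce an additional independent isotropic direction of $\langle \g_1,\g_2,\g_3\rangle \ot N_\C$ living in a different Peirce block, yielding a nilpotent orthogonal to the first.

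The main obstacle I expect is this last implication, namely carefully translating ``no two non-proportional orthogonal nilpotents'' into ``$\g_3 \equiv 1 \pmod{N_\C(\C^\times)}$''. This requires analyzing the Peirce decomposition of $\H(\C;\G)$ with respect to the three orthogonal diagonal idempotents, identifying each off-diagonal Peirce component with a copy of $\C$ carrying an appropriately scaled norm form, and locating precisely which of the three components can supply the given nilpotent line. The fact that the octonion norm is a $3$-fold Pfister form (so in particular a round form with useful multiplicative behavior) provides enough structure to make this analysis work, and this is essentially the content of the Albert--Jacobson argument \cite[Theorem 6]{AJ}.
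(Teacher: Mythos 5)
Your overall route is the same as the paper's: show $\A$ is reduced (hence some $\H(\C;\G)$), rule out split $\C$ because the split Albert algebra visibly contains two non-proportional orthogonal nilpotents, and then appeal to Albert--Jacobson \cite[Theorem 6]{AJ} for the normalization of $\G$ to $\mathrm{diag}(1,-1,1)$. The paper's proof is exactly these three steps, with \cite[Theorem 5.5.1]{SV} and \cite[Theorem 5.4.5]{SV} supplying the first.

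One substantive correction to your sketch of the last step, though. You assert that the hypothesis ``no two non-proportional orthogonal nilpotents'' is what pins $\g_3$ down to the class of $1$ modulo $N_\C(\C^\times)$, a different class producing a second, orthogonal nilpotent. That is not how the argument goes, and the claimed implication is false: once $\C$ is a division algebra, the $k$-rank of $\Aut_k(\H(\C;\G))$ is at most $1$, so the absence of two non-proportional orthogonal nilpotents is \emph{automatic} and carries no further information about $\G$. What actually happens is that $\H(\C;1,-1,c)\cong\H(\C;1,-1,1)$ for \emph{every} $c\in k^{\times}$. Indeed, reduced Albert algebras with a fixed coordinate algebra are classified by their quadratic trace form, whose off-diagonal part is $\langle \g_1\g_2,\g_2\g_3,\g_3\g_1\rangle\ot N_\C$ up to squares; this ternary form has square discriminant, so it is of the shape $\langle a,b,ab\rangle\ot N_\C$, and as soon as it is isotropic the roundness of the Pfister form $N_\C$ forces it to be isometric to $(8\times\mathbb{H})\perp\langle -1\rangle\ot N_\C$, independently of which entry of $\G$ you have not yet normalized. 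So the existence of a single nilpotent, together with $\C$ being division, already yields the normal form; the ``no two orthogonal nilpotents'' hypothesis is used only (via the split case) to force $\C$ to be division, not to constrain $\g_3$. If you intend to prove the normalization rather than cite it, this is the point you would need to repair.
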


\begin{proof}
Indeed, if the Albert algebra $\A$ contains a nontrivial nilpotent element, then it is reduced (\cite[Theorem 5.5.1]{SV}).
Hence the algebra $\A$ is $k$-isomorphic to $\H(\C;\G)$ for some octonion algebra $\C$ and for some diagonal matrix $\G \in \GL_3(k)$ (\cite[Theorem 5.4.5]{SV}).
If the octonion algebra $\C$ is split, then $\A$ itself is split and then it is clear that it contains two non-proportional orthogonal nilpotent elements. 
Since we assume that it is not the case for $\A$, the octonion algebra $\C$ is a division algebra. 
Moreover, by \cite[Theorem 6]{AJ} the matrix $\G$ can be taken to be the diagonal matrix with entries $1, -1, 1$.
\end{proof}

We will need more results regarding the reduced Albert algebras, i.e., the Albert algebras that contain nontrivial nilpotent elements. 

Fix a reduced Albert algebra $\A$ over $k$ and a nontrivial idempotent $u \in \A$. 
Assume that $Q(u) = 1/2$, where $Q$ denotes the norm form on $\A$. 
Such an idempotent always exists and is called a primitive idempotent, as it can not be written as $u_1 + u_2$ where $u_i^2 = u_i$ and $u_1u_2 = 0$ (\cite[Lemma 5.2.2]{SV}). 
We describe certain subgroups in the group $\Aut_k(\A)$ in terms of this idempotent. 
The bilinear form associated to $Q$ is denoted by $\langle \cdot, \cdot \rangle$. 
Define 
$$E_0 := \big\{x \in \A: \langle x, 1\rangle = \langle x, u\rangle = 0, ux = 0\big\} .$$
The restriction of $Q$ to $E_0$ is non-degenerate (\cite[\S 5.3]{SV}). 
Let $G_u$ be the subgroup of $\Aut_k(\A)$ consisting of the automorphisms of $\A$ which fix the idempotent $u$. 
If $Q_0$ denotes the restriction of the norm form $Q$ to $E_0$, we get the following description of the group $G_u$ (\cite[Proposition 7.1.6]{SV}).

\begin{propn}\label{propn:u}
The group $G_u$ is $k$-isomorphic to $\Spin(Q_0, E_0)$. 
\end{propn}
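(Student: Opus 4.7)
The plan is to exploit the Peirce decomposition of $\A$ relative to $u$. Setting $\A_\lambda(u) := \{x \in \A : ux = \lambda x\}$, the standard theory of reduced Albert algebras gives $\A = ku \oplus \A_{1/2}(u) \oplus \A_0(u)$ with $\dim \A_0(u) = 10$ and $\dim \A_{1/2}(u) = 16$. Since $u$ is primitive, the idempotent $1-u$ lies in $\A_0(u)$, and one checks directly that $\A_0(u) = ku \oplus k(1-u) \oplus E_0$. In particular $\dim E_0 = 9$ and $Q_0 := Q|_{E_0}$ is a non-degenerate $9$-dimensional quadratic form; the two orthogonality conditions $\langle x, 1 \rangle = \langle x, u \rangle = 0$ precisely cut out the trace-zero complement of the idempotent directions inside $\A_0(u)$.

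My first step is to construct a homomorphism $\phi : G_u \to \mathrm{O}(Q_0, E_0)$. Any $g \in G_u$ preserves the unit $1$ (being a unital algebra automorphism), fixes $u$ by hypothesis, and hence commutes with left multiplication $L_u$; therefore $g$ preserves each Peirce component. Since $g$ preserves the norm $Q$, it restricts to an isometry of $(E_0, Q_0)$, which defines $\phi$.

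The key second step is to promote $\phi$ to a map $\tilde\phi : G_u \to \Spin(Q_0, E_0)$. For this I would use the classical fact that the Jordan product equips $\A_{1/2}(u)$ with the structure of a half-spin module over the Clifford algebra of $(E_0, Q_0)$: extracting from the Jordan identity and the Peirce multiplication rules, one obtains $x \cdot (x \cdot v) = \tfrac{1}{2} Q_0(x)\, v$ for all $x \in E_0$ and $v \in \A_{1/2}(u)$. This is exactly the Clifford relation, and the dimensions match since the spin representation of $\Spin(9)$ has dimension $2^4 = 16$. Any $g \in G_u$ intertwines this Clifford action with its restriction $\phi(g)$ on $E_0$, so the operator $g|_{\A_{1/2}(u)}$ is a canonical $\Spin$-lift of $\phi(g)$, giving $\tilde\phi$.

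Finally I would verify that $\tilde\phi$ is an isomorphism of algebraic groups defined over $k$. Injectivity is immediate: an element of $\ker \tilde\phi$ acts trivially on $E_0$, on $\A_{1/2}(u)$, and on $k1 \oplus ku \oplus k(1-u)$, which jointly span $\A$. For the isomorphism, both $G_u$ (the stabilizer in $F_4 = \Aut_k(\A)$ of a primitive idempotent) and $\Spin(Q_0, E_0)$ are smooth connected groups of dimension $36$, so an injective $k$-homomorphism between them is an isomorphism. The main obstacle will be the half-spin identification of $\A_{1/2}(u)$: one must derive the Clifford relation from the Jordan axioms with the correct normalization, and equally importantly ensure that $\tilde\phi$ is rational over $k$ rather than only over $\bar k$. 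Once the spin-module structure is in hand, the remaining verifications are essentially formal.
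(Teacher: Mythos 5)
The paper does not actually prove this proposition: it is quoted from Springer--Veldkamp \cite[Proposition 7.1.6]{SV}. Your outline follows the same circle of ideas as that reference --- Peirce decomposition, $E_0$ as a $9$-dimensional quadratic space, $\A_{1/2}(u)$ as the $16$-dimensional spin module --- but runs the construction in the direction $G_u \to \Spin(Q_0,E_0)$, whereas the standard treatment builds a homomorphism $\Spin(Q_0,E_0)\to \Aut_k(\A)$ out of the Clifford module structure and identifies its image with $G_u$. The choice of direction matters, because it is exactly at the point where you pass from an isometry to a spin element that your argument has a genuine gap.

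Concretely, three things need repair. First, a small but real slip: $u\in\A_1(u)$, not $\A_0(u)$, so the correct statement is $\A_0(u)=k(1-u)\oplus E_0$; as written, your decomposition of the $10$-dimensional space $\A_0(u)$ would force $\dim E_0=8$, contradicting your own (correct) count of $9$. (Note that $\langle x,u\rangle=0$ is automatic once $ux=0$; the one extra condition $\langle x,1\rangle=0$ cuts out the trace-zero part of $\A_0(u)$.) Second, and this is the crux, the ``canonical $\Spin$-lift'' is not justified. Granting the Clifford relation, the intertwining identity $g(x\cdot v)=\phi(g)(x)\cdot g(v)$ together with Skolem--Noether for the central simple algebra $C_0(Q_0)$ of degree $16$ only shows that $g|_{\A_{1/2}(u)}$ realizes an element of the even Clifford group $\Gamma^{+}(Q_0)$ \emph{up to a scalar in $k^\times$}; what you get for free is a homomorphism $G_u\to\Gamma^{+}(Q_0)/k^\times$, and lifting it through the central isogeny from $\Spin(Q_0)$ --- i.e.\ normalizing the scalar and controlling the spinor norm --- is precisely the nontrivial content. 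This is why the reverse construction is cleaner: an element $s\in\Spin(Q_0)\subset C_0(Q_0)^\times$ acts honestly on $E_0\oplus\A_{1/2}(u)$, extends by the identity on $ku\oplus k(1-u)$, and is checked to be an automorphism of $\A$ fixing $u$, with kernel killed because $-1\in\Spin(Q_0)$ acts by $-1$ on the $16$-dimensional piece. Third, your closing step assumes that $G_u$ is smooth, connected and of dimension $36$. The dimension can be extracted from the orbit of primitive idempotents ($52-16=36$), but connectedness of this stabilizer is not automatic and is of essentially the same depth as the proposition itself; whichever direction one argues, this surjectivity/connectedness input must be supplied rather than asserted.
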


\section{Calculation of anisotropic kernel of a group of type $F_4$}
In this section, we calculate the anisotropic kernel of a group of type $F_4$. 
Let us fix a field $k$ of characteristic other than $2, 3$ and a connected group $G$ of type $F_4$ defined over $k$. 
It is known that the possible split ranks of $G$ are $0, 1$ or $4$ (\cite[page 60]{T}). 

If the split ranks of $G$ are $0$ or $4$, i.e., if the group $G$ is either $k$-anisotropic or if it is $k$-split, then the respective anisotropic kernels are the whole group $G$ and the trivial subgroup. 

So, now on we assume that the split rank of $G$ is $1$. 
Then it follows, by Proposition \ref{propn:rank1} and the uniqueness of the Albert algebra in Theorem \ref{thm:albert}, that $\A = \H(\C; 1, -1, 1)$ for an octonion algebra $\C$ defined over $k$. 
Let $Q$ denote the norm form on the Albert algebra $\A$ and let $\langle \cdot, \cdot \rangle$ be the corresponding bilinear form. 

To calculate the anisotropic kernel $G_{an}$, we need the description of a maximal $k$-split torus in $G$.
For that, we use the following result (\cite[Lemma 5.1]{PST}), which is clear from the fact that the elements of $\H(\C; \G)$ are precisely those elements which are {\em hermitian} with respect to $\G$. 

\begin{lem}\label{lem:torus}
Let $\A = \H(\C; \G)$ be an Albert algebra over a field $K$ where $\G$ is a diagonal matrix in $\GL_3(K)$. 
There exists a map $\phi: \SO(\G) \rightarrow \Aut_K(\A)$ which sends $X \in \SO(\G)$ to the automorphism of $\A$ given by $\theta \mapsto X \theta X^{-1}$. 
\end{lem}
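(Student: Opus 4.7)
The plan is to verify three things in turn: that the map $\phi(X)\colon\theta\mapsto X\theta X^{-1}$ is well-defined (i.e.\ sends $\A$ into itself), that it preserves the Jordan product of $\A$, and, as an easy bonus, that $\phi$ is a group homomorphism. Linearity of $\phi(X)$ over $K$ is immediate since the entries of $X$ and $X^{-1}$ lie in $K$.

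For well-definedness, I would first rephrase the hermiticity condition defining $\H(\C;\G)$ as the single matrix identity $\G\theta = \overline{\theta}^{\,T}\G$, where overline denotes entrywise octonion conjugation and $T$ is ordinary transpose. Setting $\eta = X\theta X^{-1}$ and using $\overline{X} = X$ (the entries of $X$ lie in $K$) gives $\overline{\eta}^{\,T} = (X^{-1})^T \overline{\theta}^{\,T} X^T$. The defining relation $X^T\G X = \G$ of $\SO(\G)$ yields the two companion identities $X^T\G = \G X^{-1}$ and $(X^{-1})^T\G = \G X$; applying these together with the hermiticity of $\theta$ in the middle collapses $\overline{\eta}^{\,T}\G$ to $\G X\theta X^{-1} = \G\eta$, which is exactly the hermiticity of $\eta$.

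Preservation of the Jordan product reduces to preservation of the ordinary associative matrix product $\cdot$ on $\C^{3\times 3}$, because the Jordan product is its symmetrization $xy = \frac{1}{2}(x\cdot y + y\cdot x)$. Since the entries of $X, X^{-1}$ lie in the nucleus $K$ of $\C$, conjugation by $X$ distributes over the associative product: $X(\theta_1\cdot\theta_2)X^{-1} = (X\theta_1 X^{-1})\cdot(X\theta_2 X^{-1})$. The homomorphism property $\phi(XY)=\phi(X)\phi(Y)$ is then transparent from associativity in $\GL_3(K)$. The main obstacle to watch throughout is the non-associativity of $\C$: every rearrangement of an octonion-valued triple product requires that at least two of the three consecutive factors have $K$-valued entries. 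Because all the outer factors appearing in the arguments above, namely $X, X^T, X^{-1}, \G, \G^{-1}$, lie in $\GL_3(K)$, this condition is automatic at every step, and the verification goes through without friction.
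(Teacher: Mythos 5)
Your argument is correct and is essentially the route the paper takes: the paper gives no proof beyond citing \cite[Lemma 5.1]{PST} and remarking that the statement ``is clear from the fact that the elements of $\H(\C;\G)$ are precisely those elements which are hermitian with respect to $\G$,'' which is exactly the hermiticity computation $\overline{\eta}^{\,T}\G=\G\eta$ that you carry out in detail using $X^T\G X=\G$. The one cosmetic slip is calling the matrix product on $3\times 3$ octonion matrices ``associative'' --- it is not --- but, as you yourself note, every regrouping you need has at least two of three consecutive factors with entries in the central subfield $K$, so the argument is unaffected.
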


This result gives us a map, which we continue to denote by $\phi$, from the group $\SO(1, -1, 1)$ to $G$. 
The group $\SO(1, -1, 1)$ is a split group. 
Indeed, it contains the following $1$-dimensional split torus, 
$$T = \left\{\begin{pmatrix}
a & b & 0 \\
b & a & 0 \\
0 & 0 & 1
\end{pmatrix}:a^2 -b^2 = 1\right\} .$$
Observe that the kernel of the map $\phi$ is finite, hence the image of the torus $T$ under the map $\phi$ is a $1$-dimensional split torus in $G$. 
We prove that this torus $\phi(T)$ can be embedded in a certain subgroup of $G$ of type $B_4$ and so the anisotropic kernel of $G \ot_k L$ can be described as the anisotropic kernel of the same subgroup. 

\begin{lem}\label{lem:b4}
There exists a primitive idempotent $u \in \A$ such that the anisotropic kernel of the group $G$ is the same as the anisotropic kernel of the subgroup $G_u$ of $G$, consisting of those automorphisms of $\A$ that fix $u$. 
\end{lem}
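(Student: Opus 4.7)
The plan is to choose $u$ to be the primitive diagonal idempotent $\mathrm{diag}(0,0,1) \in \A$ and to establish two inclusions, $\phi(T) \subset G_u$ and $G_{an} \subset G_u$; together these will force $G_{an} = (G_u)_{an}$.

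The first inclusion is a direct matrix computation: every $X \in T$ as displayed commutes with $u = \mathrm{diag}(0,0,1)$, so $X u X^{-1} = u$ and $\phi(X)$ fixes $u$. Because $G$ has split rank one, the one-dimensional split torus $S := \phi(T)$ is maximal $k$-split in $G$, and hence also in $G_u$.

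For the second and more delicate inclusion, I would use that $G_{an} \subset Z_G(S)^\circ$ and that elements of $Z_G(S)^\circ$ act on $\A$ as Jordan automorphisms commuting with $\phi(T)$. Consequently $Z_G(S)^\circ$ preserves both the fixed subspace $\A^S$ and the $k$-variety $\Xi \subset \A$ of primitive idempotents of $\A$, and therefore their intersection. The next step is to compute explicitly that $\A^S$ is the nine-dimensional Jordan subalgebra
$$\A^S = \left\{ \begin{pmatrix} x & c & 0 \\ c & x & 0 \\ 0 & 0 & y \end{pmatrix} : x,y \in k,\ c \in \C,\ c + \overline{c} = 0 \right\},$$
and, by solving $v^2 = v$ with $Q(v) = 1/2$ inside $\A^S$, to verify that its primitive idempotents are $u$ itself together with a (possibly empty) six-dimensional family defined by $x = 1/2$, $y = 0$, and $N(c) = -1/4$. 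In particular $u$ is an isolated point of $\Xi \cap \A^S$, so the connected group $Z_G(S)^\circ$ cannot move it; this yields $G_{an} \subset Z_G(S)^\circ \subset G_u$.

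To finish, the inclusion $Z_{G_u}(S) \subset Z_G(S)$ gives $(G_u)_{an} \subset G_{an}$ directly, while $G_{an} \subset G_u$ together with $G_{an} \subset Z_G(S)$ places $G_{an}$ inside $Z_{G_u}(S)^\circ$; since $G_{an}$ is connected, semisimple and anisotropic, it must sit inside $(G_u)_{an}$, producing the desired equality. The \emph{main obstacle} is the concrete identification of $\A^S$ and the enumeration of its primitive idempotents to exhibit $u$ as an isolated point of $\Xi \cap \A^S$; once that calculation is in hand, the topological principle that a connected group fixes every isolated point of an invariant subvariety makes the remainder of the argument formal.
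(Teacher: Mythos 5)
Your argument is correct, and the decisive step is genuinely different from the paper's. Both proofs take the same idempotent $u=\mathrm{diag}(0,0,1)$ and the same observation that $\phi(T)$ fixes $u$, so that $S=\phi(T)$ is a maximal $k$-split torus of $G$ and of $G_u$; from this the containment $(G_u)_{an}\subseteq G_{an}$ is formal (this is the paper's ``easy to see'' step, which you make explicit via $Z_{G_u}(S)\subseteq Z_G(S)$). The divergence is in the reverse containment. The paper appeals to the Tits classification twice: $G_u\cong\Spin(Q_0,E_0)$ is a rank-one group of type $B_4$, so $(G_u)_{an}$ has type $B_3$, and $G_{an}$ also has type $B_3$; equality then follows because one connected group of type $B_3$ sits inside another. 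You instead prove the stronger inclusion $Z_G(S)^{\circ}\subseteq G_u$ directly, and your computations check out: the fixed space $\A^S$ is as you describe, and the only solutions of $v^2=v$, $\tr(v)=1$ in $\A^S$ are $u$ itself and the Zariski-closed family $x=1/2$, $y=0$, $N(c)=-1/4$, which does not contain $u$; hence $\{u\}$ is a connected component of $\Xi\cap\A^S$ and the connected group $Z_G(S)^{\circ}$, which preserves $\Xi\cap\A^S$, must fix $u$. Your route buys independence from Tits' index tables and from Proposition~\ref{propn:u} (you never need to know that $G_u$ has type $B_4$ or that the kernels have type $B_3$), and it yields the sharper fact that the entire connected centralizer of $S$, not merely the anisotropic kernel, lies in $G_u$; the price is the explicit enumeration of idempotents, which the paper's classification argument avoids.
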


\begin{proof}
Define $u$ to be the idempotent
$$\begin{pmatrix} 0 & 0 & 0 \\ 0 & 0 & 0 \\ 0 & 0 & 1 \end{pmatrix} \in \A .$$
Since $Q(u) := \tr(u^2)/2 = 1/2$, $u$ is a primitive idempotent. 
Let $G_u$ denote the subgroup of $G$ consisting of the automorphisms of $\A$ which fix $u$.
By Proposition \ref{propn:u}, this subgroup $G_u$ is isomorphic to the group $\Spin(Q_0, E_0)$, where 
$$E_0 := \big\{x \in \A: \langle x, 1\rangle = \langle x, u\rangle = 0, ux = 0\big\} .$$
It can be easily seen that the space $E_0$ is given explicitly by:
\begin{eqnarray*}
E_0 & = & \left\{ \begin{pmatrix} 
x & c & 0 \\
- \overline{c} & -x & 0 \\
0 & 0 & 0
\end{pmatrix} \in \A \right\} \stackrel{\sim}{\longrightarrow} k \oplus \C .
\end{eqnarray*}
Thus, the space $E_0$ is a $9$-dimensional vector space over $k$ and hence the group $G_u = \Spin(Q_0, E_0)$ is a group of type $B_4$. 
The torus $\phi(T)$ fixes the idempotent $u$, therefore $\phi(T)$ is contained in the subgroup $G_u$ of $G$. 
It is a group of $k$-rank 1, as it contains the torus $\phi(T)$. 
By Tits classification (\cite[page 55, 56]{T}), we know that the anisotropic kernel of $G_u$ is a group of type $B_3$. 
Moreover, $G_{an}$ is also a group of type $B_3$ (\cite[page 61]{T}).
It is easy to see that $(G_u)_{an}$ is contained in $G_{an}$.
Since both these are connected groups, it is clear that $G_{an} = (G_u)_{an}$. 
\end{proof}

\section{Proof of theorem 2.}\label{section:main}
This section is devoted to the proof of Theorem 2. 
We fix a field $k$ of characteristic different from $2$ and $3$, and a group $G$ of type $F_4$ defined over $k$. 
Let $\A$ denote the corresponding Albert algebra defined over $k$ with $Q$ as the norm form and $\langle \cdot, \cdot \rangle$ as the corresponding bilinear form.  
Let us also fix a field extension $L$ of $k$. 
We need to show that the anisotropic kernel of the $L$-group $G \otimes_k L$ is defined over $k$. 

If $G \otimes_k L$ is split or anisotropic then the anisotropic kernel of $G \otimes_k L$, being respectively the trivial subgroup or the whole group $G \otimes_k L$, is defined over $k$. 
It is known that the only other possibility of the split rank of a group of type $F_4$ is 1 (\cite[page 60]{T}). 
So we now assume that the group $G$ is anisotropic and $G \otimes_k L$ has split rank 1. 
It then follows from Lemma \ref{lem:b4} that the anisotropic kernel $G_{an}$ is the same as the anisotropic kernel of a classical subgroup of $G$ of type $B_4$, $G_u$, for an explicit idempotent $u$ in the Albert algebra $\A \otimes_k L$. 

We now give a sufficient condition for the anisotropic kernel $(G_u)_{\rm an}$ to be defined over the field $k$. 
The condition is also necessary but we do not prove it since it is not required in our proof of Theorem \ref{thm:main}.

\begin{lem}\label{lem:kernel}
If there exists an octonion algebra $\C'$ defined over $k$ such that $\C' \ot_k L$ is isomorphic to the coordinate algebra of $\A \otimes_k L$ then the anisotropic kernel of the group $G_u$ is defined over the field $k$. 
\end{lem}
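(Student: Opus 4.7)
The plan is to compute the quadratic form $Q_0$ defining $G_u = \Spin(Q_0, E_0)$ completely explicitly, decompose it using the Pfister-form structure of the octonion norm, and then observe that the resulting anisotropic subform (and hence its Spin group) is manifestly a base change from $k$ once one is given the $k$-form $\C'$ of the coordinate algebra. The main obstacle is essentially bookkeeping of quadratic forms; after the initial computation the descent is a formal consequence of the multiplicativity of Pfister forms.

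First, I would parametrise $E_0$ via the $L$-linear isomorphism $E_0 \stackrel{\sim}{\longrightarrow} L \oplus \C$ from the proof of Lemma \ref{lem:b4}, which sends $(x, c)$ to $\begin{pmatrix} x & c & 0 \\ -\overline{c} & -x & 0 \\ 0 & 0 & 0 \end{pmatrix}$. A direct computation of $Q(y) = \tr(y^2)/2$ on such a $y$ yields $Q_0(x, c) = x^2 - N_\C(c)$, where $N_\C$ denotes the norm form of $\C$. Hence $Q_0 \cong \langle 1 \rangle \perp (-N_\C)$ as quadratic forms over $L$.

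Next I would invoke Pfister-form theory. Since we are in the split-rank-$1$ case, $\C$ is a division octonion algebra over $L$, so $N_\C$ is an anisotropic $3$-fold Pfister form. In particular $N_\C$ represents $1$, and so one may write $N_\C \cong \langle 1 \rangle \perp N'_\C$ for a $7$-dimensional ``pure'' subform $N'_\C$, which is determined up to isometry by Witt cancellation. Substituting gives $Q_0 \cong \langle 1, -1 \rangle \perp (-N'_\C)$, in which $\langle 1, -1 \rangle$ is a hyperbolic plane; so the anisotropic part of $Q_0$ is $-N'_\C$ (which is anisotropic as a subform of the anisotropic form $-N_\C$). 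By the standard dictionary between Spin groups and quadratic forms, $(G_u)_{\mathrm{an}} \cong \Spin(-N'_\C)$, a group of type $B_3$ over $L$.

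Finally, I would bring in the hypothesis on $\C'$. Since $\C' \ot_k L \cong \C$ as octonion algebras, their norm forms agree after base change: $N_{\C'} \ot_k L \cong N_\C$. Writing $N_{\C'} \cong \langle 1 \rangle \perp N'_{\C'}$ over $k$ and comparing with $N_\C \cong \langle 1 \rangle \perp N'_\C$ over $L$, Witt cancellation forces $N'_{\C'} \ot_k L \cong N'_\C$. Therefore $H := \Spin(-N'_{\C'})$ is a group of type $B_3$ defined over $k$ such that $H \ot_k L \cong \Spin(-N'_\C) \cong (G_u)_{\mathrm{an}}$, which is precisely the assertion that the anisotropic kernel of $G_u$ is defined over $k$.
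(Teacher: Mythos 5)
Your proof is correct and follows essentially the same route as the paper: both hinge on the explicit computation $Q_0(x,c) = x^2 - N(c)$ on $E_0 \cong L \oplus \C$ and on transporting the norm form along the isomorphism $\C' \ot_k L \cong \C$. The only difference is presentational: you split off the hyperbolic plane $\langle 1,-1\rangle$ and descend the $7$-dimensional pure part of the Pfister form directly, whereas the paper descends the full $9$-dimensional form to $Q_0'$ over $k$, checks that its Witt index is exactly one, and takes the anisotropic kernel of $H = \Spin(Q_0', k \oplus \C')$ as the required $k$-form.
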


\begin{proof}
Let $\C$ denote the coordinate algebra of $\A \otimes_k L$. 
Observe that the form $Q_0$, the restriction of $Q \otimes_k L$ to $E_0$, is given by
$$Q_0: L \oplus \C \rightarrow L, \hskip2mm (x, c) \mapsto x^2 - N(c) ,$$
where $N$ is the norm form of the octonion algebra $\C$. 
If there exists an octonion algebra $\C'$ defined over $k$ such that $\C' \ot_k L$ is isomorphic to $\C$ and if $N'$ is the norm form of $\C'$, then $N' \ot_k L$ is equivalent to $N$. 
If we define a form $Q_0':k \oplus \C' \rightarrow k$, given by $(x, c') \mapsto x^2 - N'(c')$ then $Q_0'\ot_k L$ is equivalent to the form $Q_0$. 
Moreover, the form $Q_0'$ is isotropic over $k$, $Q_0'(1, 1) = 0$, and its Witt index over $k$ is not more than one as the Witt index over $L$ is one. 
It follows that the group $H = \Spin(Q_0', k \oplus \C')$ is a $k$-rank one group and $H_{\rm an} \ot_k L$ is isomorphic to the anisotropic kernel of the group $G_u$. 
\end{proof}

Now we state a nontrivial result, attributed to Serre and Rost (\cite[Theorem 1.8]{PR}), before going on to prove the main theorem. 

\begin{thm}\label{thm:Serre}
Let $J$ be an Albert algebra over a field $k$ of characteristic not $2$ or $3$. 
There exists a (unique) octonion algebra $\C$ defined over $k$ such that for any extension $L/k$ with the property that $J \ot_k L$ is reduced, $\C \ot_k L$ is the coordinate algebra of $J \ot_k L$. 
\end{thm}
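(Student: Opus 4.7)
The plan is to construct the octonion algebra $\C$ cohomologically, via the Rost (Arason) invariant $f_3(J) \in H^3(k, \mathbb{Z}/2)$ of $J$, and then verify compatibility under base change. I would first recall the reduced case: if $J \cong \H(\C_0; \g_1, \g_2, \g_3)$ for some octonion $\C_0$, then $f_3(J)$ equals the Arason invariant of the norm form $N_{\C_0}$, which is a $3$-fold Pfister form. In particular, $f_3$ of a reduced Albert algebra is a pure symbol, and the coordinate octonion is recovered from it via the classical bijection between octonion algebras and $3$-fold Pfister forms (their norms). The key nontrivial input I would invoke is the theorem of Serre and Rost asserting that for an \emph{arbitrary} (possibly anisotropic) Albert algebra $J$ over $k$, the class $f_3(J) \in H^3(k,\mathbb{Z}/2)$ is still a pure symbol.

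Granting this, the Arason--Pfister theorem implies that the symbol $f_3(J)$ determines a unique $3$-fold Pfister form $\pi$ over $k$ up to isometry, and hence a unique octonion algebra $\C$ over $k$ whose norm form is $\pi$. For the compatibility check, let $L/k$ be any extension such that $J \ot_k L$ is reduced, with coordinate octonion $\C_L$. By functoriality, $f_3(J \ot_k L) = \mathrm{res}_{L/k} f_3(J)$, and by construction this class is represented by the norm form of $\C \ot_k L$. On the other hand, applying the reduced case over $L$ shows it is also the Arason class of $N_{\C_L}$. Since a $3$-fold Pfister form is determined up to isometry by its Arason invariant, we get $N_{\C_L} \cong N_{\C \ot_k L}$, and since an octonion algebra is determined by its norm form, $\C_L \cong \C \ot_k L$. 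Uniqueness of $\C$ over $k$ follows in the same way: if $\C'$ also satisfies the conclusion, then over a single splitting field $L$ on which $J$ becomes reduced, $\C' \ot_k L \cong \C_L \cong \C \ot_k L$, forcing $N_{\C'} \cong N_{\C}$ already over $k$ by a standard descent argument on $3$-fold Pfister forms, hence $\C' \cong \C$.

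The main obstacle is precisely the input that $f_3(J)$ is a pure symbol for arbitrary $J$. In the reduced case this is built in, but in the anisotropic case there is no \emph{a priori} octonion algebra over $k$ producing the class, and the result requires the theory of cohomological invariants of $F_4$-torsors developed by Rost (with ideas of Serre); it may be viewed as a statement about the image of the Rost invariant $H^1(k,F_4) \to H^3(k,\mathbb{Z}/2)$ factoring through $H^1(k,G_2)$. Once this symbol property is in hand, the remainder of the argument is a comparatively formal descent using the Arason--Pfister theorem together with the classification of octonion algebras by their norm forms.
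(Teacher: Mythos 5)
The paper does not actually prove this statement: it is quoted as a known theorem of Serre and Rost, with a citation to Petersson--Racine \cite[Theorem 1.8]{PR}, and is then used as a black box in the proof of Theorem \ref{thm:main}. Your proposal is therefore not comparable to a proof in the paper, but it is a faithful reconstruction of how the cited result is actually established: one defines the mod-$2$ invariant $f_3(J)\in H^3(k,\mathbb{Z}/2)$ for an arbitrary Albert algebra (via the Pfister decomposition of the trace form, equivalently via cohomological invariants of $F_4$-torsors), shows it is a pure symbol, and recovers $\C$ from the corresponding $3$-fold Pfister form using the classification of octonion algebras by their norm forms together with the injectivity of the Arason invariant on $3$-fold Pfister forms. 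You correctly isolate where all the difficulty sits --- the existence of a functorial invariant that is a symbol even when $J$ is a division algebra, for which there is no coordinate octonion algebra in sight --- and you are honest that you are importing this from Serre--Rost rather than proving it; in that respect your argument is at exactly the same level of self-containedness as the paper's citation, with the added value of making the formal descent around the hard input explicit.

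One step deserves tightening: in the uniqueness argument you pass from $\C'\ot_k L\cong\C\ot_k L$ for \emph{a} reducing field $L$ to $\C'\cong\C$ over $k$ by ``a standard descent argument on $3$-fold Pfister forms.'' For an arbitrary extension $L$ this fails --- non-isometric Pfister forms over $k$ can certainly become isometric over $L$ (e.g.\ over a splitting field). The fix is easy but should be said: if $J$ is reduced take $L=k$; if $J$ is a division algebra, take $L$ to be a cubic subfield of $J$, over which $J$ becomes reduced, and apply Springer's odd-degree descent to the norm forms $N_{\C'}$ and $N_{\C}$. Alternatively, uniqueness is immediate from your own construction, since any $\C'$ satisfying the conclusion must have $e_3(N_{\C'})=f_3(J)$ over $k$. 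A last, purely terminological point: the symbol-valued invariant you use is the mod-$2$ invariant $f_3$ rather than the Rost invariant of $F_4$ in the usual sense (whose interesting part for $F_4$ is the mod-$3$ invariant $g_3$ detecting division algebras); the phrasing ``factoring through $H^1(k,G_2)$'' is the right way to say what is needed.
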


\begin{proof}[Proof of Theorem 2]
It is clear from the discussion at the beginning of this section, that we need only to consider the case where $G$ is anisotropic over $k$ and the split rank of $G \otimes_k L$ is $1$.
By Lemma \ref{lem:b4}, the anisotropic kernel of the group $G \ot_k L$ is the same as that of the subgroup $G_u$. 
Therefore it is enough to prove that $(G_u)_{\rm an}$ is defined over $k$. 
By Lemma \ref{lem:kernel}, we only have to prove the existence of an octonion algebra $\C'$ over $k$ such that $\C' \ot_k L$ is isomorphic to the coordinate algebra $\C$ of $\A \otimes_k L$.
But this is precisely the statement of Theorem \ref{thm:Serre}. 
Thus, the anisotropic kernel of the group $G \ot_k L$ is defined over the field $k$ and hence the group $G$ is an excellent group. 
\end{proof}

We now give another proof of Theorem \ref{thm:main}, due to an anonymous referee. 

If the split rank of the group $G \otimes_k L$ is 1, then $G \otimes_k L$ is isomorphic to $\Aut_L(\A \otimes_k L)$ where $\A \otimes_k L$ is a reduced Albert algebra defined over $L$. 
Then there exists an octonion algebra $\C$ defined over $L$ such that $\A \otimes_k L$ is $L$-isomorphic to $\H(\C; 1, -1, 1)$. 
By Theorem \ref{thm:Serre}, the octonion algebra $\C$ is defined over $k$. 
Define $G'$ to be the $k$-automorphism group of the $k$-algebra $\H(\C; 1, -1, 1)$. 
Then $G'$ is a group of type $F_4$ and its $k$-rank is 1. 
Clearly, the anisotropic kernel of the group $G'$ is isomorphic to $(G \otimes_k L)_{an}$ over $L$ and hence the anisotropic kernel of the group $G \otimes_k L$ is defined over $k$. 

Indeed, the above theorem, Theorem \ref{thm:Serre}, says that an Albert algebra is excellent over any field $k$ of characteristic not $2$ or $3$ and hence it follows that a group of type $F_4$ is also excellent over $k$.

\end{document}